\documentclass[a4paper,12pt,reqno]{amsart}
\usepackage{mathtools}
\usepackage{amsfonts,amsmath}
\usepackage{amssymb,amsthm}		
\usepackage{amsthm}
\usepackage[T1]{fontenc}
\usepackage{lmodern}
\usepackage{hyperref}
\usepackage[all]{hypcap}
\usepackage{fullpage}
\usepackage{color}
\usepackage{graphicx}
\usepackage[dvipsnames]{xcolor}
\usepackage{fancyhdr}
\usepackage{psfrag}

\usepackage{indentfirst}
\usepackage{float}
\usepackage{latexsym}
\usepackage{graphics}
\usepackage{verbatim}

\setlength{\topmargin}{-0.3in}
\setlength{\oddsidemargin}{0.20in}
\setlength{\evensidemargin}{0.20in}
\setlength{\textwidth}{6in}
\setlength{\textheight}{9.2in}
\setlength{\parskip}{0.15in}

\newcommand{\ord}{{\rm ord}}

\newtheorem{theorem}{Theorem}
\newtheorem*{theorem*}{Theorem}
\newtheorem{lemma}[theorem]{Lemma}

\theoremstyle{definition}

\theoremstyle{remark}


\title{Prime powers dividing products of consecutive integer values of $x^{2^n}+1$}
\date{\today}
\subjclass[2010]{11A41; 11B83; 11C08}
\keywords{Polynomials, congruences, cyclotomic fields}

\author{Stephan~Baier}
\address{Stephan~Baier\\%
	Ramakrishna Mission Vivekananda Educational Research Institute\\%
	Department of Mathematics\\%
	G.\ T.\ Road, PO~Belur Math, Howrah, West Bengal~711202\\%
	India}
\email{email\_baier@yahoo.de}

\author{Pallab Kanti Dey}
\address{Pallab Kanti Dey\\%
Ramakrishna Mission Vivekananda Educational Research Institute\\%
	Department of Mathematics\\%
	G.\ T.\ Road, PO~Belur Math, Howrah, West Bengal~711202\\%
	India}
\email{pallabkantidey@gmail.com}	

\begin{document}
\maketitle
 

\maketitle

{\bf Abstract:} Let $n$ be a positive integer and $f(x) := x^{2^n}+1$. In this paper, we study orders of primes dividing products of the form 
$P_{m,n}:=f(1)f(2)\cdots f(m)$. We prove that if $m > \max\{10^{12},4^{n+1}\}$, then there exists a prime divisor $p$ of $P_{m,n}$ such that 
$\ord_{p}(P_{m,n} )\leq n\cdot 2^{n-1}$. For $n=2$, we establish that for every positive integer $m$, there exists a prime divisor $p$ of 
$P_{m,2}$ such that $\ord_{p} (P_{m,2}) \leq 4$. Consequently, $P_{m,2}$ is never a fifth or higher power. This
extends work of Cilleruelo \cite{jc} who studied the case $n=1$.

\section{Introduction and main result}
For a prime $p$ and a nonzero integer $s$, define $\ord_{p}(s)$ to be the unique non-negative integer $i$ for which $p^i| s$ but $p^{i+1} \nmid s$. Let 
$f(x) \in  \mathbb{Z}[x]$ be a polynomial of degree $k \geq 2$ with positive leading coefficient which does not vanish at any positive integer. Set
$$
A_f(m):=f(1)f(2)\cdots f(m)
$$
and note that this is nonzero for all $m\in \mathbb{N}$ by the above assumption.

A major unsolved problem in analytic number theory concerns the question whether $f$ represents infinitely many primes if $f$ is irreducible and there exists no prime $p$ dividing $f(m)$ for all integers $m$. If $f$ represents infinitely many primes, then trivially, for infinitely many integers $m$, there exists a prime such that
$\ord_p(A_f(m))=1$. For particular polynomials $f$, several authors investigated the related question whether for all {\it sufficiently large} integers $m$ there exists a prime $p$ with $\ord_{p}(A_f(m)) = 1$. If this is the case, then, in particular, $A_f(m)$ is a perfect power for at most finitely many $m\in \mathbb{N}$.

Below we summarize a number of results from the literature. For the polynomial $f(x) = x^2 + 1$, J. Cilleruelo \cite{jc} proved the following result which we shall generalize in this paper.

\begin{theorem}[Cilleruelo] \label{Cill}
Let $f(x)=x^2+1$ and $m>3$. Then there exists a prime divisor $p$ of $A_f(m)$ with $\ord_{p}(A_f(m)) =1$. 
Consequently, $A_f(m)$ is a perfect power only for $m=3$, in which case we have $A_f(3)=10^2$.
\end{theorem}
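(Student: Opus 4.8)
The plan is to reduce the statement to a purely multiplicative question about prime factors of $A_f(m)=f(1)\cdots f(m)$, and then to settle that question by matching an asymptotic formula for $\log A_f(m)$ against an upper bound coming from Mertens' theorem in an arithmetic progression. The reduction is the following observation: \emph{if $p$ is a prime with $p>2m$ and $p\mid A_f(m)$, then $\ord_p(A_f(m))=1$}. Indeed, $p\mid k^2+1$ for some $1\le k\le m$ forces $-1$ to be a quadratic residue mod $p$, hence $p\equiv 1\pmod 4$ (note $p>2m\ge 8$). Writing the two square roots of $-1$ in $\Z/p\Z$ as $s$ and $p-s$ with $1\le s\le(p-1)/2$, the bound $p>2m$ gives $p-s\ge(p+1)/2>m$, so among $1,\dots,m$ the only integer with $p\mid k^2+1$ is $k=s$; and $p^2>4m^2>m^2+1\ge k^2+1$ shows $p^2\nmid k^2+1$. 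Thus exactly one factor of $A_f(m)$ is divisible by $p$, and only to the first power. Hence it suffices to prove: \emph{for every $m>3$, the integer $A_f(m)$ has a prime factor exceeding $2m$}. (For $m=3$ this genuinely fails, $A_f(3)=2\cdot5\cdot10=10^2$ having no prime factor $>6$, which is exactly why $m=3$ is exceptional.)

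Suppose, for contradiction, that every prime factor of $A_f(m)$ is $\le 2m$. Every prime dividing $A_f(m)$ is $2$ or $\equiv 1\pmod 4$. With $N_j(p):=\#\{1\le k\le m:\ p^j\mid k^2+1\}$ we have $\ord_p(A_f(m))=\sum_{j\ge1}N_j(p)$; for odd $p\equiv 1\pmod 4$ the congruence $k^2\equiv-1\pmod{p^j}$ has exactly two solution classes mod $p^j$, so $N_j(p)\le 2\lfloor m/p^j\rfloor+2\le 2m/p^j+2$, and $N_j(p)=0$ as soon as $p^j>m^2+1$; summing the geometric series, $\ord_p(A_f(m))\le \frac{2m}{p-1}+\frac{2\log(m^2+1)}{\log p}$, while $\ord_2(A_f(m))=\lceil m/2\rceil$ since $k^2+1\equiv 2\pmod 4$ for odd $k$. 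Therefore
\[
\log A_f(m)=\sum_{p\mid A_f(m)}\ord_p(A_f(m))\log p\ \le\ 2m\!\!\sum_{\substack{p\le 2m\\ p\equiv 1\,(4)}}\!\!\frac{\log p}{p-1}\ +\ 2\log(m^2+1)\,\pi(2m)\ +\ \lceil m/2\rceil\log 2 .
\]
By Mertens' theorem in the progression $1\pmod 4$, $\sum_{p\le x,\ p\equiv 1(4)}\frac{\log p}{p}=\tfrac12\log x+O(1)$, and $\sum_p\frac{\log p}{p(p-1)}$ converges, so the first term is $m\log(2m)+O(m)=m\log m+O(m)$; since $\pi(2m)=O(m/\log m)$ the second term is $O(m)$, and so is the third. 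Hence $\log A_f(m)\le m\log m+O(m)$. On the other hand, $A_f(m)=(m!)^2\prod_{k=1}^m(1+k^{-2})$ and $\prod_{k=1}^\infty(1+k^{-2})$ converges (to $\sinh\pi/\pi$), so Stirling gives $\log A_f(m)=2m\log m-2m+O(\log m)$. Combining, $2m\log m\le m\log m+O(m)$, i.e.\ $m\log m=O(m)$, which is false for $m$ sufficiently large.

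The main obstacle is that this argument only rules out $m$ beyond some threshold $m_0$; to cover all $m>3$ one must make every $O(\cdot)$ above effective — an explicit Mertens estimate for primes $\equiv 1\pmod 4$, an explicit upper bound for $\pi(2m)$, explicit Stirling bounds, and the exact constant $\sinh\pi/\pi$ — so as to pin down a concrete and small $m_0$, and then verify the finitely many remaining cases $3<m\le m_0$ by direct computation (factor each $k^2+1$ and test for a prime factor $>2m$). All the structural content lies in the reduction of the first paragraph together with the fact that only the primes $\equiv 1\pmod 4$, i.e.\ half of all primes, can divide the factors $k^2+1$: this "missing half" is precisely what creates the decisive factor $2$ between the upper bound $m\log m$ and the true size $2m\log m$ of $\log A_f(m)$. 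Turning that gap into the sharp cutoff $m>3$ is routine but accounts for essentially all the length of the proof.
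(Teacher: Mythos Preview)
The paper does not itself prove this theorem; it is quoted from Cilleruelo \cite{jc} as background for the paper's own generalisations, so there is no proof here to compare against directly. That said, your outline is precisely Cilleruelo's argument, and it is also the $n=1$ specialisation of the method the present paper uses for its Theorem~\ref{1}: your reduction ``$p>2m$ and $p\mid A_f(m)\Rightarrow\ord_p(A_f(m))=1$'' is the $n=1$ instance of the paper's Theorem~\ref{3} (obtained there via cyclotomic fields, by you via the direct observation that the two square roots of $-1$ modulo $p$ cannot both lie below $p/2$), and your comparison of $\log A_f(m)$ against $2\log(m!)$ using a Mertens estimate for primes $\equiv 1\pmod 4$ mirrors exactly what the paper does in \eqref{eq4}--\eqref{eq16} with $2^n\log(m!)$ and Lemma~\ref{sum3}.

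The plan is correct and your diagnosis of what remains is accurate: the substantive work in Cilleruelo's paper consists of replacing the $O(\cdot)$ terms by explicit numerical inequalities sharp enough to bring the threshold $m_0$ within reach of a short finite check, and then performing that check. You have described this step but not carried it out, so what you have is a sound strategy rather than a complete proof of the stated theorem.
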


Cilleruelo's work \cite{jc} used only elementary tools such as Chebyshev's upper bound inequality for the primes counting function. In subsequent work by Fang \cite{jhf}, his technique was applied to products $A_f(m)$ corresponding to the polynomials $4x^2+1$ and $2x^2-2x+1$. Yang,  Togb\'{e} and He \cite{yth} proved that for any irreducible quadratic polynomial $f(x) \in \mathbb{Z}[x]$, there exists a prime $p$ with $\ord_{p} (A_f(m)) = 1$ if $m \geq C$, where $C$ is a computable constant depending  on the coefficients of $f(x)$.

Furthermore, the above problem has been investigated by many authors for polynomials of the form $f(x) = x^k + 1$, where $k$ is an odd positive integer. G{\"u}rel and Kisisel \cite{gk} settled the case when $k = 3$. Based on an idea due to W. Zudilin, Zhang and Wang \cite{zw} extended this result to odd primes $k \geq 5$. Recently, Chen et al. \cite{cgr} managed to handle all odd prime powers $k$. Chen and Gong \cite{cg} treated the case when $k$ is a product of at most two odd primes and Dey and Laishram \cite{dl} managed to cover all $k$'s which are composed of at most four odd primes.

Thus, for polynomials $f(x) = x^k+1$ with $k$ odd, a lot of research has been done. With regard to even $k$'s, the authors are aware only of Cilleruelo's Theorem, stated above, for the case $k=2$. In this paper, we investigate orders of primes dividing $A_f(m)$ for polynomials of the form $f(x)=x^k+1$ when $k$ is a power of 2. Note that $f(x)=x^k+1$ is irreducible iff $k$ is a power of $2$. Throughout the sequel, we set
$$P_{m,n} := \prod\limits_{x\le m} \left(x^{2^n}+1\right).$$
We shall extend Cilleruelo's Theorem for the case $n=1$ to larger $n$'s as follows.

\begin{theorem}\label{1} 
Let $n\ge 2$ be an integer.  Then there exists a prime divisor $p$ of $P_{m,n}$ with $\ord_{p}(P_{m,n}) \leq n \cdot 2^{n-1}$ if
$m>\max\{10^{12},4^{n+1}\}$. Consequently, in this case, $P_{m,n}$ is not a perfect $q$-th power if $q$ is a positive integer exceeding $n\cdot 2^{n-1}$.
\end{theorem}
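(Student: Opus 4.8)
The plan is to follow Cilleruelo's strategy: assume for contradiction that $\ord_p(P_{m,n})\ge q+1$ for \emph{every} prime $p\mid P_{m,n}$, where $q:=n\cdot 2^{n-1}$, and derive a contradiction from the size of $P_{m,n}$. The first step is to pin down the prime divisors. Since $x^{2^n}+1=\Phi_{2^{n+1}}(x)$, the odd primes dividing $P_{m,n}$ are precisely those with $p\equiv 1\pmod{2^{n+1}}$; for such $p$ the congruence $x^{2^n}\equiv -1\pmod{p^k}$ has exactly $2^n$ solutions for every $k\ge 1$ (it is solvable because $2^{n+1}\mid p-1$, has $\gcd(2^n,p-1)=2^n$ solutions modulo $p$, and each lifts uniquely by Hensel's lemma since $p$ is odd and the roots are units), while $\ord_2(P_{m,n})=\lceil m/2\rceil$. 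Counting solutions in $[1,m]$ class by class then yields, for odd $p$, $\ord_p(P_{m,n})=2^n\ord_p(m!)+\delta_p$ with $0\le\delta_p\le 2^{2n}\log_p m+2^{n+1}$.

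Next I would show that $\ord_p(P_{m,n})\le q$ whenever $p$ exceeds the threshold $T:=m^{2^n/n}$. The key observation is that for $p>2m$ the $2^n$ roots of $x^{2^n}+1$ modulo $p$ are interchanged in pairs by $x\mapsto p-x$ (legitimate because $2^n$ is even), and $r+(p-r)=p>2m$, so at most $2^{n-1}$ of them lie in $[1,m]$. Hence $\ord_p(P_{m,n})=\sum_{x\le m,\,p\mid x^{2^n}+1}\ord_p(x^{2^n}+1)\le 2^{n-1}\log_p(m^{2^n}+1)$, and for $p>T$ this is $<2^{n-1}\cdot\frac{2^n\log m+\log 2}{(2^n/n)\log m}=n\,2^{n-1}+\frac{n\log 2}{2\log m}$, which is $\le q$ once $m>2^{n/2}$ — and this holds under the hypotheses. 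So, under the contradiction hypothesis, $P_{m,n}$ is $T$-smooth.

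Now I would put in the size estimates. By Stirling, $\log P_{m,n}=2^n\log m!+\sum_{x\le m}\log(1+x^{-2^n})=2^n m\log m-2^n m+O(2^n\log m)$. On the other hand, $T$-smoothness gives $\log P_{m,n}=\lceil m/2\rceil\log 2+\sum_{p\le T,\;p\equiv 1\,(2^{n+1})}\ord_p(P_{m,n})\log p$; bounding the main part of $\ord_p(P_{m,n})\log p$ by $\frac{2^n m\log p}{p-1}$ and invoking Mertens' theorem for the progression $1\pmod{2^{n+1}}$ (which has density $1/\varphi(2^{n+1})=1/2^n$), the main term is $\sim 2^n m\cdot\frac{1}{2^n}\log T=\frac{2^n}{n}m\log m$. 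Since $\frac{2^n}{n}<2^n$ for $n\ge 2$, this is strictly smaller than $\log P_{m,n}$, so comparing the two expressions forces $2^n m\log m-2^n m\le\frac{2^n}{n}m\log m+(\mathrm{error})$, which fails for $m>\max\{10^{12},4^{n+1}\}$ — the desired contradiction. The final assertion is then immediate: if $P_{m,n}=N^{q'}$ with $q'>q$, every prime divisor has order divisible by $q'$, hence $\ge q'\ge q+1$.

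The real difficulty is controlling the error terms in the last paragraph uniformly in $n$, so that precisely the stated bounds on $m$ suffice. The dangerous contributions are those of prime powers $p^k>m$ with $p\le T$ dividing some $x^{2^n}+1$ — equivalently, the squarefull parts of the $x^{2^n}+1$; bounding them termwise by $2^{2n}\log_p m$ and summing over $p\le T=m^{2^n/n}$ produces a quantity of size $\approx 2^n m^{2^n/n}$, which would swamp the gap $2^n(1-1/n)m\log m$. Making this work needs (i) the identity $\sum_{k\ge 1}\theta(m^{1/k})=\psi(m)=O(m)$ to absorb the part with $p^k\le m$, (ii) a sieve/divisor estimate showing $\sum_{x\le m}\log\bigl(\text{squarefull part of }x^{2^n}+1\bigr)=o(\log P_{m,n})$, and (iii) effective forms of the Chebyshev and Mertens estimates, also in the progression $1\pmod{2^{n+1}}$. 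I expect (ii), together with the careful tracking of all constants, to be where the genuine work lies; the rest is Cilleruelo's scheme adapted to $\Phi_{2^{n+1}}$.
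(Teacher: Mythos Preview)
Your overall scheme is Cilleruelo's, and the arithmetic of $\Phi_{2^{n+1}}$ (odd $p\mid P_{m,n}\Rightarrow p\equiv 1\pmod{2^{n+1}}$, exactly $2^n$ liftable roots, $\ord_2 P_{m,n}=\lceil m/2\rceil$) is set up correctly. The gap is the smoothness step. Your pairing $x\mapsto p-x$ uses only the involution $-1\in(\Z/2^{n+1}\Z)^\times$ and therefore cannot do better than halve the number of admissible residues; this yields only $p\le T$ with $T\asymp m^{2^n/n}$. That threshold is fatally weak for the size comparison you propose. Indeed, once one knows that the odd part of $P_{m,n}$ is supported on the progression $1\bmod 2^{n+1}$, the contribution of primes $p\le m$ to $\log P_{m,n}$ is at most
\[
\sum_{\substack{p\le m\\ p\equiv 1\,(2^{n+1})}}\Bigl(\frac{2^n m}{p-1}+O(2^n)\Bigr)\log p \;=\; m\log m + O(2^n m),
\]
so the primes in $(m,T]$ must carry the remaining $(2^n-1)\,m\log m+O(2^n m)$ of $\log P_{m,n}$. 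This is not an artifact of crude estimates; it is the true order of that tail. Hence no bound of the shape ``main term $\sim \frac{2^n}{n}m\log m$ plus a small error'' can hold, and your fixes (i)--(iii) do not touch the real obstruction: (i) only handles $p^k\le m$, and (ii) is a misdiagnosis, since already the \emph{squarefree} contribution from primes $m<p\le T$ (each such $p$ dividing some $x^{2^n}+1$ to the first power) accounts for essentially all of the missing $(2^n-1)m\log m$.

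What the paper does instead is prove, by an argument in $\Q(\zeta_{2^{n+1}})$, that if $\ord_p(P_{m,n})\ge n\cdot 2^{n-1}+1$ then necessarily $p\le 2(m+1)$. The point is that the full Galois group $(\Z/2^{n+1}\Z)^\times$, not just $\{\pm 1\}$, acts on the roots: among any $2^{n-m-1}+1$ primitive $2^{n+1}$-th roots of unity two have ratio in $\mu_{2^{m+1}}$, and a short combinatorial analysis of partitions of $N=n\cdot 2^{n-1}+1$ then produces a nonzero element of $\cO_{\Q(\zeta_{2^{m+1}})}$ of norm $\le (2(m+1))^{2^m}$ divisible by $p^{2^m}$. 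This linear bound $p\le 2(m+1)$ (replacing your $T=m^{2^n/n}$) is exactly what makes Cilleruelo's inequality close: the ``extra'' range $m<p\le 2(m+1)$ now contributes only $O(m)$, against a left side of size $\asymp m\log m$. Without an input of this strength the method does not go through.
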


For $n=2$, we are able to remove the condition that $m>\max\{10^{12},4^{n+1}\}$, thus obtaining the following. 

\begin{theorem}\label{2}
For all positive integers $m$, there exists a prime divisor $p$ of $P_{m,2}$ with $\ord_{p}(P_{m,2}) \leq 4$. Consequently, 
$P_{m,2}$ is never a perfect $q$-th power if $q$ is a positive integer exceeding $4$. 
\end{theorem}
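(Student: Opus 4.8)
The plan is to reduce to a bounded range via Theorem~\ref{1} and then carry out a fully explicit version of Cilleruelo's smoothness argument tailored to $f(x)=x^4+1$. For $n=2$ one has $n\cdot 2^{n-1}=4$ and $\max\{10^{12},4^{n+1}\}=10^{12}$, so Theorem~\ref{1} already yields the conclusion when $m>10^{12}$; hence it remains to treat $1\le m\le 10^{12}$. The case $m\le 8$ is immediate with $p=2$: if $x$ is odd then $x^2\equiv 1\pmod 8$, so $x^4\equiv 1\pmod{16}$ and $x^4+1\equiv 2\pmod{16}$, giving $\ord_2(x^4+1)=1$; if $x$ is even then $\ord_2(x^4+1)=0$. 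Thus $\ord_2(P_{m,2})=\lceil m/2\rceil\le 4$ for $m\le 8$, and from now on we assume $9\le m\le 10^{12}$.

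The key step is the following unconditional statement: \emph{every prime $p>2m$ dividing $P_{m,2}$ satisfies $\ord_p(P_{m,2})\le 4$.} For odd $p$, the congruence $x^4\equiv -1\pmod{p^k}$ is solvable only if $p\equiv 1\pmod 8$, and then it has exactly four solutions, which split into two pairs $\{r,p^k-r\}$ because $(-r)^4\equiv r^4$. Assume $p>2m$. First, in each such pair at most one element lies in $[1,m]$, since the two sum to $p>2m$; hence at most two integers $x\le m$ have $p\mid x^4+1$. Second, for any such $x$ we have $p^{\ord_p(x^4+1)}\mid x^4+1\le m^4+1<(2m)^4\le p^4$, so $\ord_p(x^4+1)\le 3$. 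Third, there do not exist two distinct $x_1,x_2\le m$ with $p^2\mid x_i^4+1$: such $x_i$ are distinct roots of $x^4+1$ modulo $p^2$, so $x_1\equiv\varepsilon x_2\pmod{p^2}$ for a fourth root of unity $\varepsilon$; discarding $\varepsilon=1$ (which forces $x_1=x_2$), the cases $\varepsilon=-1$ and $\varepsilon^2=-1$ give $p^2\mid x_1+x_2$ and $p^2\mid x_1^2+x_2^2$ respectively, both impossible since $0<x_1+x_2\le 2m<p$ and $0<x_1^2+x_2^2\le 2m^2<p^2$. Writing $\ord_p(P_{m,2})=\sum_{x\le m,\ p\mid x^4+1}\ord_p(x^4+1)$, we therefore have at most two summands, each at most $3$, at most one of which exceeds $1$; hence $\ord_p(P_{m,2})\le 3+1=4$.

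By this statement it suffices to show that for every $m$ above an explicit threshold $M_0$ there is some $x\le m$ for which $x^4+1$ has a prime factor greater than $2m$ (that prime then divides $P_{m,2}$ and has order $\le 4$), and then to verify the finitely many remaining values $9\le m<M_0$ directly. Suppose, to the contrary, that $x^4+1$ is $2m$-smooth for all $x\le m$; then $P_{m,2}$ is $2m$-smooth. Its prime divisors are $2$ and primes $p\equiv 1\pmod 8$; for the latter, four roots of $x^4+1$ modulo each $p^k$ give $\ord_p(P_{m,2})\le \frac{4m}{p-1}+4\lfloor\log(m^4+1)/\log p\rfloor$, while $\ord_2(P_{m,2})=\lceil m/2\rceil$. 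Summing these against $\log p$ yields
$$
\log P_{m,2}\ \le\ 4m\sum_{\substack{p\le 2m\\ p\equiv 1\,(8)}}\frac{\log p}{p-1}\ +\ O(m),
$$
and an effective form of Mertens' theorem for the residue class $1\bmod 8$ (which has density $1/\varphi(8)=1/4$), combined with effective Chebyshev and Brun--Titchmarsh bounds controlling the $O(m)$ term, turns the right-hand side into $m\log m+O(m)$ with explicit constants. On the other hand $\log P_{m,2}=\sum_{x\le m}\log(x^4+1)\ge 4\log m!\ge 4m\log m-4m$, so once $m$ is large enough the surplus $3m\log m$ outweighs the accumulated $O(m)$ errors — a contradiction, which is exactly the desired existence of a large prime factor for $m\ge M_0$. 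Finally, if $P_{m,2}$ were a perfect $q$-th power with $q>4$ then every $\ord_p(P_{m,2})$ would be divisible by $q\ge 5$, contradicting the prime of order $\le 4$ just exhibited.

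The main obstacle will be the explicit bookkeeping in the last step: driving the constants in the effective Mertens sum over $p\equiv 1\pmod 8$, in the count of the relevant primes up to $2m$, and in the contributions of $p=2$ and of higher prime powers, low enough that $M_0$ is small enough to make the residual finite check entirely routine. The degree-$4$ polynomial is what makes this realistic — it produces a comfortable factor-$4$ gap between the lower bound $4\log m!$ and the $2m$-smooth upper bound $m\log m+O(m)$, as opposed to the much tighter factor-$2$ margin present in Cilleruelo's treatment of $x^2+1$.
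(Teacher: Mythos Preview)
Your strategy is sound in outline but genuinely different from the paper's, and as written it is not yet a proof: after the clean reduction via Theorem~\ref{1} and the correct, self-contained argument that any prime $p>2m$ dividing $P_{m,2}$ satisfies $\ord_p(P_{m,2})\le 4$, you leave both the determination of an explicit threshold $M_0$ (via effective Mertens/Brun--Titchmarsh constants for the class $1\bmod 8$) and the finite verification for $9\le m<M_0$ entirely undone, flagging this yourself as ``the main obstacle.''

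The paper bypasses the whole analytic apparatus. After invoking Theorem~\ref{1} for $m>10^{12}$ and disposing of $m\le 5$ with $p=2$, it covers the remaining range by a two-step leapfrog with explicit prime witnesses of the shape $x_0^4+1$. First $p=6^4+1=1297$ is prime; its four roots modulo $1297$ are $6,216,1081,1291$ (each with $\ord_{1297}(x^4+1)=1$), so $\ord_{1297}(P_{m,2})\le 4$ for all $6\le m\le 1302$. Then $p=1302^4+1=2873716601617$ is again prime, and listing its four roots pushes the bound $\ord_p(P_{m,2})\le 4$ all the way to $m=2873716602918>10^{12}$. Your key observation about primes exceeding $2m$ is correct and morally explains why a single prime of size $\approx x_0^4$ controls such a long range of $m$, but the paper never needs it, nor any Mertens-type input: the lucky primality of $6^4+1$ and $1302^4+1$ reduces everything to a handful of modular computations. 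Your route is the natural one when no such convenient primes are at hand; here it trades a two-line check for an effective-constants exercise you have not actually carried out.
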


\section{Notations and preliminaries}
In this section, we provide some inequalities related to primes counting functions which are essential to prove our main results. As usual, we 
reserve the symbol "$p$" for primes and use the notations
$$
\pi(x):=\sum\limits_{p\le x} 1, \quad \pi(x;q,a):=\sum\limits_{\substack{p\le x\\ p\equiv a \bmod{q}}} 1, \quad 
\mbox{and} \quad  
\theta(x;q,a):=\sum\limits_{\substack{p\le x\\ p\equiv a \bmod{q}}} \log p
$$
throughout this paper.  Below are the lemmas that we shall use.

\begin{lemma} \label{prime0}
For any $x\ge 10^6$, we have
$$
\pi(x)\le 1.1\cdot \frac{x}{\log x}.
$$
\end{lemma}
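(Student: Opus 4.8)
The plan is to deduce the lemma from a known explicit upper bound for the prime counting function rather than to reprove a Chebyshev-type inequality from scratch, which would take us too far afield. The classical Rosser--Schoenfeld bound $\pi(x) < 1.25506\, x/\log x$ (valid for all $x > 1$) has a fixed constant well above $1.1$ and so is useless here; instead I would invoke one of the sharper refinements of Dusart, for instance
$$
\pi(x) \le \frac{x}{\log x}\left(1 + \frac{1.2762}{\log x}\right) \qquad (x > 1),
$$
or, equally good for our range, $\pi(x) < x/(\log x - 1.1)$ valid for $x \ge 60184$. Either of these is strong enough once we have passed the threshold $x = 10^6$.

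The execution is then a one-step monotonicity argument. Writing the first inequality as $\pi(x) \le g(x)\cdot x/\log x$ with $g(x) = 1 + 1.2762/\log x$, the factor $g$ is strictly decreasing for $x > 1$, so it suffices to verify $g(10^6) < 1.1$. Since $\log(10^6) = 6\log 10 = 13.8155\ldots$, one gets $g(10^6) = 1 + 1.2762/13.8155\ldots = 1.0923\ldots < 1.1$, hence $g(x) < 1.1$ for every $x \ge 10^6$; multiplying by $x/\log x$ gives the claim. If one prefers the second input bound, write its right-hand side as $(x/\log x)\bigl(1 - 1.1/\log x\bigr)^{-1}$, note that $(1 - 1.1/\log x)^{-1}$ is decreasing on the relevant range, and check that at $x = 10^6$ its value is $\bigl(1 - 1.1/13.8155\ldots\bigr)^{-1} = 1.0865\ldots < 1.1$. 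Either way the inequality $\pi(x) \le 1.1\, x/\log x$ holds for all $x \ge 10^6$, with a little room to spare (indeed $\pi(10^6) = 78498$ versus $1.1 \cdot 10^6/\log(10^6) = 79620\ldots$).

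The only point that requires any care is the choice of input inequality: one must select an explicit bound whose implied constant has already fallen below $1.1$ by the time $x = 10^6$, which rules out the cruder Rosser--Schoenfeld form but is comfortably met by the Dusart-type estimates above. Beyond that there is no real obstacle; the content of the lemma reduces to quoting the literature and performing the single numerical check at the threshold.
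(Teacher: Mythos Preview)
Your proposal is correct and matches the paper's proof essentially verbatim: the paper also quotes Dusart's inequality $\pi(x)\le (1+1.2762/\log x)\,x/\log x$ for $x>1$ and observes that the bracketed factor drops below $1.1$ once $x\ge 10^6$. Your additional remarks (the alternative Dusart bound and the numerical sanity check $\pi(10^6)=78498$) are extra but harmless.
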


\begin{proof}
In \cite{pd}, it was established that
\begin{equation*}
 \pi(x)\le \left(1+ \frac{1.2762}{\log x}\right) \cdot \frac{x}{\log x} \quad \mbox{if } x > 1. 
\end{equation*}
If $x \geq 10^6$, the desired bound follows. 
\end{proof}

\begin{lemma}\label{prime1}
For any integer $n\ge 2$ and any real $x\ge 4^{n+1}$, we have
$$
\pi(x;2^{n+1},1)\le \frac{4\cdot x}{2^n\log x}.
$$
\end{lemma}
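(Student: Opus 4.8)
The plan is to deduce this from an explicit form of the Brun--Titchmarsh theorem. Recall the Montgomery--Vaughan bound
$$
\pi(x;q,a)\le \frac{2x}{\varphi(q)\log(x/q)},
$$
valid for all integers $q<x$ and all $a$ with $\gcd(a,q)=1$. I would apply this with $q=2^{n+1}$ and $a=1$; this is legitimate in our range since $x\ge 4^{n+1}>2^{n+1}$. Using $\varphi(2^{n+1})=2^{n}$, it yields
$$
\pi(x;2^{n+1},1)\le \frac{2x}{2^{n}\log(x/2^{n+1})}.
$$

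It then remains to check the elementary inequality $\log(x/2^{n+1})\ge \tfrac12\log x$, and this is exactly the point at which the hypothesis $x\ge 4^{n+1}$ enters. Since $4^{n+1}=(2^{n+1})^{2}$, the assumption $x\ge 4^{n+1}$ gives $2^{n+1}\le\sqrt x$, hence $x/2^{n+1}\ge \sqrt x$ and therefore $\log(x/2^{n+1})\ge \tfrac12\log x>0$ (note $x\ge 4^{n+1}\ge 64$ for $n\ge 2$, so all logarithms here are positive). Substituting into the previous display gives
$$
\pi(x;2^{n+1},1)\le \frac{2x}{2^{n}\cdot\tfrac12\log x}=\frac{4x}{2^{n}\log x},
$$
which is the claimed bound.

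The argument is short and I do not expect a genuine obstacle; the only points needing care are to quote an explicit (constant-included) version of Brun--Titchmarsh that is valid in the required range of $q$ relative to $x$, and to make sure the reduction $\log(x/2^{n+1})\ge\tfrac12\log x$ is justified solely from $x\ge 4^{n+1}$. If a self-contained treatment is preferred over citing Brun--Titchmarsh, one could instead run a direct Brun or Selberg sieve estimate on the progression $1 \bmod 2^{n+1}$, but since the constants work out on the nose, invoking the known explicit inequality is the most economical route.
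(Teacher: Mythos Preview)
Your proof is correct and follows essentially the same approach as the paper: apply the Montgomery--Vaughan form of Brun--Titchmarsh with $q=2^{n+1}$, then use $x\ge 4^{n+1}=(2^{n+1})^2$ to bound $\log(x/2^{n+1})\ge\tfrac12\log x$. The paper phrases this last step equivalently as $\frac{\log x}{\log(x/2^{n+1})}\le\frac{\log 4^{n+1}}{\log 2^{n+1}}=2$, but the content is identical.
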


\begin{proof}
The Brun-Titchmarsh inequality, as given by Montgomery and Vaughan \cite{MV}, asserts that
$$
\pi(x;q,a)\le \frac{2x}{\varphi(q)\log(x/q)}
$$
whenever $q<x$. This implies
$$
\pi(x;2^{n+1},1)\le \frac{2x}{2^n\log(x/2^{n+1})}.
$$
It follows that
$$
\pi(x;2^{n+1},1)\le \frac{2x}{2^n\log x}\cdot \frac{\log x}{\log(x/2^{n+1})}\\ \le \frac{2x}{2^n\log x}\cdot 
\frac{\log 4^{n+1}}{\log 2^{n+1}}\le \frac{4x}{2^n\log x}
$$
if $x\ge 4^{n+1}$, which completes the proof.
\end{proof} 

\begin{lemma}\label{sum3}
For any $x \ge 10^6$ and $a\in \{1,3,5,7\}$, we have
$$\sum\limits_{\substack {p \leq x \\ p \equiv a \bmod 8}} \frac{\log p}{p} > 0.245 \log x-3.15.$$
\end{lemma}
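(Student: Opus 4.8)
The plan is to separate the four reduced residue classes modulo $8$ by Dirichlet characters. Write $\chi_0$ for the principal character modulo $8$ and $\chi_{-4},\chi_8,\chi_{-8}$ for the three non-principal ones; all four are real-valued since $(\Z/8\Z)^{\times}\cong(\Z/2\Z)^2$, with $\chi_{-4}$ induced from the non-trivial character modulo $4$ and $\chi_8,\chi_{-8}$ of conductor $8$. Orthogonality gives, for $a\in\{1,3,5,7\}$,
$$
\sum_{\substack{p\le x\\ p\equiv a\bmod 8}}\frac{\log p}{p}=\frac14\sum_{\chi\bmod 8}\chi(a)\,T_\chi(x),\qquad T_\chi(x):=\sum_{p\le x}\frac{\chi(p)\log p}{p},
$$
the prime $p=2$ dropping out automatically since every $\chi(2)=0$. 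As each $\chi(a)\in\{-1,1\}$ and $T_{\chi_0}(x)=\bigl(\sum_{p\le x}\frac{\log p}{p}\bigr)-\frac{\log 2}{2}$, this yields
$$
\sum_{\substack{p\le x\\ p\equiv a\bmod 8}}\frac{\log p}{p}\ \ge\ \frac14\left(\sum_{p\le x}\frac{\log p}{p}-\frac{\log 2}{2}-\bigl|T_{\chi_{-4}}(x)\bigr|-\bigl|T_{\chi_8}(x)\bigr|-\bigl|T_{\chi_{-8}}(x)\bigr|\right).
$$

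For the principal contribution I would quote an explicit form of Mertens' second theorem, e.g.\ $\sum_{p\le x}\frac{\log p}{p}\ge\log x-2$ for all $x\ge 2$ (classical; sharper constants are available in Rosser--Schoenfeld and Dusart), which furnishes the main term with coefficient $\frac14>0.245$. The crux is to bound the three non-principal sums $T_\chi(x)$ by an absolute constant. For this I would use that $\sum_{n\le x}\frac{\Lambda(n)\chi(n)}{n}\to-\frac{L'}{L}(1,\chi)$, whose tail is controlled, via partial summation, by an explicit Chebyshev-type bound for $\psi(x,\chi)=\sum_{n\le x}\Lambda(n)\chi(n)$; since $8$ is a small modulus there is no exceptional zero, so the explicit estimates for $\psi(x;q,a)$ and $\theta(x;q,a)$ of Ramar\'e--Rumely (or of Bennett--Martin--O'Bryant--Rechnitzer) give $|\psi(x,\chi)|\ll x\exp(-c\sqrt{\log x})$ with explicit constants, while $L(1,\chi)$ and $L'(1,\chi)$ for these conductor-$4$ and conductor-$8$ characters are classical explicit numbers (e.g.\ $L(1,\chi_{-4})=\pi/4$). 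This produces $|T_\chi(x)|\le C$ with an explicit $C$, hence $\sum_{p\le x,\,p\equiv a\bmod 8}\frac{\log p}{p}\ge\frac14\log x-C'$ for an explicit $C'$. Since $\frac14-0.245=0.005>0$, the surplus $(\frac14-0.245)\log x\ge 0.005\log(10^6)$ together with the generous constant $3.15$ absorbs $C'$ — indeed it would absorb even an $O(\log\log x)$-sized error, which is all one gets if a weaker input (say only Brun--Titchmarsh-level bounds for $\theta(t;8,a)$) is used in a partial-summation argument — and the inequality follows once $C'\le 3.15+0.005\log(10^6)$, which the explicit constants respect with room to spare.

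The step I expect to be the main obstacle is making $|T_\chi(x)|\le C$ (equivalently, the error term in $\theta(x;8,a)$) genuinely uniform all the way down to $x=10^6$, since explicit prime estimates in arithmetic progressions are sharpest for large $x$. The clean way around this is to split the range at a fixed $X_0$: for $x\ge X_0$ the asymptotic argument above applies, while on $[10^6,X_0]$ one uses that $\sum_{p\le x,\,p\equiv a\bmod 8}\frac{\log p}{p}$ is non-decreasing in $x$ and the right-hand side $0.245\log x-3.15$ is below $0.24$ already at $x=10^6$, so it suffices to verify the finitely many inequalities $\sum_{p\le 10^{k},\,p\equiv a\bmod 8}\frac{\log p}{p}>0.245\log(10^{k+1})-3.15$ for $10^k\le X_0$ by direct computation (each left side is of size roughly $\tfrac14\log(10^k)$, comfortably above the right side). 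An equivalent character-free route starts from the identity $\sum_{p\le x,\,p\equiv a\bmod 8}\frac{\log p}{p}=\frac{\theta(x;8,a)}{x}+\int_1^x\frac{\theta(t;8,a)}{t^2}\,dt$ and inserts the Ramar\'e--Rumely lower bounds for $\theta(t;8,a)$, the slack $0.005\log x$ again absorbing the at-worst $\log\log x$-sized contribution of their error term to the integral.
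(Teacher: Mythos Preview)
Your proposal is correct, and in fact the ``equivalent character-free route'' you describe in your final sentence is precisely what the paper does. The paper writes
\[
\sum_{\substack{p\le x\\ p\equiv a\bmod 8}}\frac{\log p}{p}=\frac{\theta(x;8,a)}{x}+\int_{2}^{x}\frac{\theta(t;8,a)}{t^{2}}\,dt,
\]
truncates the integral at $10^{6}$, and inserts the explicit bound $\bigl|\theta(t;8,a)-t/4\bigr|<0.024\,t/\log t$ for $t\ge 10^{6}$ from Bennett--Martin--O'Bryant--Rechnitzer. Integration then gives a main term $\tfrac14\log x$ and an error $0.024\log\log t$, and the paper finishes by bounding $\log\log x<0.191\log x$ on $x\ge 10^{6}$, which is exactly the ``$\log\log x$ absorbed by the $0.005\log x$ slack'' mechanism you anticipated.

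Compared with this, your primary character-theoretic route is correct but more circuitous: decomposing via the four characters modulo $8$, invoking Mertens for $T_{\chi_{0}}$, and then bounding each non-principal $T_{\chi}(x)$ uniformly requires essentially the same input (an explicit estimate for $\theta(t;8,a)$ or $\psi(t,\chi)$) but packaged less directly, and it forces you to track three separate constants $-L'/L(1,\chi)$ plus tails rather than a single error term. One concrete simplification the paper's approach buys: because the BMOR bound is valid for all $t\ge 10^{6}$, there is no need for the range-splitting and finite computational check you propose for $[10^{6},X_{0}]$; the partial-summation integral handles the full range $x\ge 10^{6}$ in one stroke.
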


\begin{proof}
Using partial summation, we transform the sum in question into
\begin{equation} \label{parsum}
\begin{split}
\sum\limits_{\substack{p\le x\\ p\equiv a\bmod{8}}} \frac{\log p}{p} =  \frac{\theta(x;8,a)}{x}+\int\limits_{2}^x \frac{\theta(t;8,a)}{t^2} \mbox{d} t 
>  \frac{\theta(x;8,a)}{x}+\int\limits_{10^6}^x \frac{\theta(t;8,a)}{t^2} \mbox{d} t.
\end{split}
\end{equation}
By Corollary 1.7. in \cite{bmor}, we have
\begin{equation} \label{Coro}
\left|\theta(t;q,a)-\frac{t}{\varphi(q)}\right|< 0.024 \cdot \frac{t}{\log t} \quad \mbox{if } 1\le q\le 10^5,\ (q,a)=1 \mbox{ and } t\ge 10^6. 
\end{equation}
This implies
$$
\theta(t;8,a)>\frac{t}{4}-0.024 \cdot \frac{t}{\log t}
$$
if $t\ge 10^6$. Plugging this into \eqref{parsum}, and performing integration, we get
\begin{equation} \label{perform}
\sum\limits_{\substack{p\le x\\ p\equiv a\bmod{8}}} \frac{\log p}{p} > \frac{1}{4}-0.024\cdot \frac{1}{\log x}+\left[\frac{1}{4}\log t - 0.024\log\log t\right]_{10^6}^x.
\end{equation}
Since $\frac{\log x}{\log \log x}$ is an increasing function for $x \geq 10^6$, we have
$$\log \log x < 0.191\log x.$$
Using this inequality, we obain the desired result from \eqref{perform}.
\end{proof}

\section{Systems of congruences modulo prime powers} \label{conprime}
Let $n\in \mathbb{N}$ be given. A significant part of our method consists in finding an as small as possible number $N=N(n)$ such that for 
every partition 
\begin{equation} \label{part}
N=k_1+k_2+\cdots +k_s \quad (k_1\ge k_2\ge ...\ge k_s)
\end{equation}
of $N$ and any distinct $x_1,...,x_s \in \mathbb{N}$ satisfying a system of congruences of the form
\begin{equation} \label{system}
\begin{split}
x_1^{2^n}+1 \equiv & 0 \bmod{p^{k_1}}\\ 
\vdots\\ 
x_s^{2^n}+1 \equiv & 0 \bmod{p^{k_s}}
\end{split}
\end{equation}
with  $p$ an odd prime, it follows that
$$
p \ll_n x,
$$
where we set
$$
x:=\max\{|x_1|,...,|x_s|\}.
$$
It will become clear in section 8 how this problem, which is also of independent interest, enters the proof of Theorems \ref{1} and \ref{2}. The 
following sections 4 to 7 
are dedicated to solving this problem. The result will lead us directly to the quantity $n\cdot 2^{n-1}$ in Theorem \ref{1}.   

\section{Reformulation in Cyclotomic fields}
For a number field $K$, we denote by $\mathcal{O}_K$ the ring of algebraic integers in $K$. If $L$ is a finite extension of the number field $K$ and
$a\in L$, we denote by $N_{L:K}(a)$ the norm of $a$ over $K$. 

We write 
$$
x_i^{2^n}+1 = \prod\limits_{j=1}^{2^n} \left(x_i+\alpha_j\right),
$$
where $\left\{\alpha_1,...,\alpha_{2^n}\right\}$ is the set of primitive $2^{n+1}$-th roots of unity. Then
$$
p|(x_1^{2^n}+1) \Rightarrow \mathfrak{P}|(x_1+\alpha_1) \mathcal{O}_{\mathbb{Q}(\alpha_1)}
$$
for some prime ideal $\mathfrak{P}$ in $\mathcal{O}_{\mathbb{Q}(\alpha_1)}$ lying over $p$, but 
$$
\tilde{\mathfrak{P}} \nmid (x_1+\alpha_1)\mathcal{O}_{\mathbb{Q}(\alpha_1)}
$$ 
for any
prime ideal $\tilde{\mathfrak{P}}\not=\mathfrak{P}$ conjugate to $\mathfrak{P}$. Otherwise,  $\tilde{\mathfrak{P}}$ would divide both the ideals $(x_1+\alpha_1)\mathcal{O}_{\mathbb{Q}(\alpha_1)}$ and $(x_1+\alpha_j)\mathcal{O}_{\mathbb{Q}(\alpha_1)}$ for some $j\not=1$ and hence the ideal $(\alpha_1-\alpha_j)\mathcal{O}_{\mathbb{Q}(\alpha_1)}$. However, this is not possible because $p>2$ and the
discriminant
$$
\mbox{disc}(\mathcal{O}_{\mathbb{Q}(\alpha_1)})=\prod\limits_{1\le j_1<j_2\le 2^n} (\alpha_{j_1}-\alpha_{j_2})^2
$$
of $\mathcal{O}_{\mathbb{Q}(\alpha_1)}$ has no rational prime divisors other than 2. 
Hence, from the first congruence in \eqref{system}, it follows that
$$
\mathfrak{P}^{k_1}|(x_1+\alpha_1)\mathcal{O}_{\mathbb{Q}(\alpha_1)}.
$$
If $i\in \{2,...,s\}$, then
$\mathfrak{P}|(x_i+\alpha_{j_i})\mathcal{O}_{\mathbb{Q}(\alpha_1)}$ for some unique $j_i\in \{1,...,2^n\}$, and we have 
$$
\mathfrak{P}^{k_i}|(x_i+\alpha_{j_i})\mathcal{O}_{\mathbb{Q}(\alpha_1)}
$$
from the $i-th$ congruence in \eqref{system} by a similar argument as above. Set $j_1:=1$. Since $k_1\ge k_2\ge ...\ge k_s$, it follows that for every $r\in \{1,...,s\}$, we have
\begin{equation} \label{divides}
\mathfrak{P}^{k_r}|(x_i+\alpha_{j_i})\mathcal{O}_{\mathbb{Q}(\alpha_1)} \quad \mbox{if } 1\le i\le r.
\end{equation}

Now let $m< n$ be a non-negative integer and $r\in \{1,...,s\}$. Denote by $\zeta_k$ a primive $k$-th root of unity. If  
$$
\beta_1,...,\beta_r \in \mathcal{O}_{\mathbb{Q}(\zeta_{2^{m+1}})}
$$
are such that
$$
\beta_1\alpha_{j_1}+\cdots +\beta_r\alpha_{j_r} = 0,
$$
then
$$
\beta_1(x_1+\alpha_{j_1})+\cdots +\beta_r(x_r+\alpha_{j_r}) \in \mathcal{O}_{\mathbb{Q}(\zeta_{2^{m+1}})}.
$$
Since also
$$
\mathfrak{P}^{k_r}| (\beta_1(x_1+\alpha_{j_1})+\cdots +\beta_r(x_r+\alpha_{j_r}))\mathcal{O}_\mathbb{Q}(\zeta_{2^{n+1}})
$$
using \eqref{divides}, it follows that
$$
\left(\mathfrak{P} \cap \mathcal{O}_\mathbb{Q}(\zeta_{2^{m+1}})\right)^{k_r}| (\beta_1(x_1+\alpha_{j_1})+\cdots +\beta_r(x_r+\alpha_{j_r}))\mathcal{O}_\mathbb{Q}(\zeta_{2^{m+1}})
$$
and hence
$$
p^{k_r} | N_{\mathbb{Q}(\zeta_{2^{m+1}}):\mathbb{Q}}\left(\beta_1(x_1+\alpha_{j_1})+\cdots +\beta_r(x_r+\alpha_{j_r})\right).
$$
If in addition 
$$
\beta_1x_1+\cdots +\beta_rx_r\not=0,
$$
then we deduce that
\begin{equation} \label{keybound}
p^{k_r}\le N_{\mathbb{Q}(\zeta_{2^{m+1}}):\mathbb{Q}}\left(\beta_1(x_1+\alpha_{j_1})+\cdots +\beta_r(x_r+\alpha_{j_r})\right)
\end{equation}
and hence
$$
p^{k_r} \ll_{n,m,\beta_1,...,\beta_r} x^{2^m},
$$
which implies
$$
p \ll_{n,m,\beta_1,...,\beta_r} x,
$$
provided that
\begin{equation} \label{krcond}
k_r\ge 2^m.
\end{equation}
Clearly, we also have the bound 
$$
p \ll_n x
$$
if
\begin{equation} \label{krcondi}
k_r\ge 2^n.
\end{equation}

Now we consider an arbitrary non-negative integer $m$. If $m\ge n$, then we set $R(m,n):=1$. If $m<n$, then let $R(m,n)$ be the smallest number $r$ such that given any $r$ primitive $2^{n+1}$-th roots of unity
$\gamma_1,...,\gamma_r$ (not necessarily distinct), there exist 
$$
\beta_1,...,\beta_r\in \mathcal{O}_{\mathbb{Q}(\zeta_{2^{m+1}})}
$$
such that
$$
\beta_1\gamma_1+\cdots + \beta_r\gamma_r=0 
$$
and 
$$
\beta_1x_1+\cdots + \beta_rx_r\not=0
$$
for any distinct positive integers $x_1,...,x_r$. Then it follows that
$$
p\ll_n x,
$$
provided that for any partition of the form in \eqref{part}
we have
\begin{equation}
r\ge R(m,n) \quad \mbox{and} \quad
k_r\ge 2^m
\end{equation}
for some $r\in \{1,...,s\}$ and $m\in \{1,...,n\}$. 

Note that $R(m,n)$ decreases as $m$ increases. Hence, we may choose
$$
m:=\lfloor \log_2 k_r \rfloor
$$
and our above condition reduces to 
\begin{equation} 
r\ge R(\lfloor \log_2 k_r \rfloor,n) \quad \mbox{for some } r\in \{1,...,s\}.
\end{equation}

\section{Determining $R(m,n)$}
Throughout the sequel, for any real number $x$, we denote by $\lceil x \rceil$ the smallest integer greater or equal $x$ and by $\lfloor x \rfloor$ the largest integer less or equal $x$. We now prove the following. 

\begin{lemma} \label{Rformula} For any natural numbers $m$ and $n$, we have 
$$
R(m,n)= \lfloor 2^{n-m-1}\rfloor +1.
$$
\end{lemma}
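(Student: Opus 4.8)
The plan is to separate the trivial case $m\ge n$ from the main case $m<n$, and in the latter to exploit the structure of the primitive $2^{n+1}$-th roots of unity inside the relative extension $\Q(\zeta_{2^{n+1}})/\Q(\zeta_{2^{m+1}})$. If $m\ge n$, then $2^{n-m-1}\le 1/2$, so $\lfloor 2^{n-m-1}\rfloor+1=1=R(m,n)$ by definition and there is nothing to prove. So assume $m<n$, write $L:=\Q(\zeta_{2^{m+1}})$ and $d:=2^{n-m}$, and fix compatible roots of unity so that $\zeta_{2^{n+1}}^{d}=\zeta_{2^{m+1}}$. Since $[\Q(\zeta_{2^{n+1}}):L]=d$ and $\zeta_{2^{n+1}}$ is a root of $X^{d}-\zeta_{2^{m+1}}\in L[X]$, the set $\{1,\zeta_{2^{n+1}},\dots,\zeta_{2^{n+1}}^{d-1}\}$ is an $L$-basis of $\Q(\zeta_{2^{n+1}})$.

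The key structural observation I would record is that every primitive $2^{n+1}$-th root of unity $\gamma$ has a unique representation
\[
\gamma=\zeta_{2^{m+1}}^{q}\,\zeta_{2^{n+1}}^{b},\qquad b\in\{1,3,5,\dots,d-1\},
\]
obtained by writing $\gamma=\zeta_{2^{n+1}}^{a}$ with $a$ odd and reducing $a$ modulo $d$ (the residue $b$ is odd because $d$ is even). In particular there are only $2^{n-m-1}$ possible ``$b$-values'', and for distinct such $b$ the elements $\zeta_{2^{n+1}}^{b}$ are part of the above power basis, hence $L$-linearly independent.

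For the upper bound $R(m,n)\le 2^{n-m-1}+1$ I would take any primitive $2^{n+1}$-th roots of unity $\gamma_1,\dots,\gamma_r$ with $r=2^{n-m-1}+1$ (not necessarily distinct); by the pigeonhole principle two of them, say $\gamma_a=\zeta_{2^{m+1}}^{q_a}\zeta_{2^{n+1}}^{b}$ and $\gamma_c=\zeta_{2^{m+1}}^{q_c}\zeta_{2^{n+1}}^{b}$ with $a<c$, share a common $b$-value. Then $\beta_a:=\zeta_{2^{m+1}}^{q_c}$, $\beta_c:=-\zeta_{2^{m+1}}^{q_a}$ and $\beta_i:=0$ otherwise lie in $\cO_{L}$ and satisfy $\beta_1\gamma_1+\cdots+\beta_r\gamma_r=0$. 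It then remains to check that $\beta_1x_1+\cdots+\beta_rx_r=\zeta_{2^{m+1}}^{q_c}x_a-\zeta_{2^{m+1}}^{q_a}x_c\ne 0$ for any distinct positive integers $x_1,\dots,x_r$: if it vanished, then $x_c/x_a$ would equal the root of unity $\zeta_{2^{m+1}}^{q_c-q_a}$, and a positive rational which is a root of unity must equal $1$, forcing $x_a=x_c$ — a contradiction. Hence the defining property of $R(m,n)$ holds for this $r$.

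For the lower bound $R(m,n)\ge 2^{n-m-1}+1$ I would exhibit $\gamma_1,\dots,\gamma_{2^{n-m-1}}$ for which the property fails, namely the $\zeta_{2^{n+1}}^{b}$ with $b$ ranging over $\{1,3,\dots,d-1\}$. These are $L$-linearly independent, so $\beta_1\gamma_1+\cdots+\beta_{2^{n-m-1}}\gamma_{2^{n-m-1}}=0$ with $\beta_i\in\cO_L$ forces all $\beta_i=0$, and then $\beta_1x_1+\cdots+\beta_{2^{n-m-1}}x_{2^{n-m-1}}=0$ for every choice of distinct positive integers $x_i$; so no admissible tuple $(\beta_i)$ exists for $r=2^{n-m-1}$. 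Finally, since validity of the defining property for some $r$ is preserved on increasing $r$ (pad with $\beta_{r+1}=0$), $R(m,n)$ is the minimum of an upward-closed set, and combining the two bounds gives $R(m,n)=2^{n-m-1}+1=\lfloor 2^{n-m-1}\rfloor+1$. The only genuinely delicate point is the verification in the upper bound that the explicitly constructed relation is never orthogonal to a vector of distinct positive integers; everything else is bookkeeping with the relative power basis.
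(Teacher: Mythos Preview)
Your proof is correct and follows essentially the same approach as the paper: the trivial case $m\ge n$, a pigeonhole argument on the residue of the exponent of $\zeta_{2^{n+1}}$ modulo $2^{n-m}$ for the upper bound (with the same two-term relation), and the same explicit example $\gamma_j=\zeta_{2^{n+1}}^{2j-1}$ for the lower bound via $L$-linear independence. Your write-up is in fact slightly more detailed than the paper's at the one delicate point, namely the verification that $\beta_a x_a+\beta_c x_c\neq 0$ for distinct positive integers $x_a,x_c$.
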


\begin{proof}
This is trivial if $m\ge n$. So assume $m<n$. Then we claim that among $2^{n-m-1}+1$ (not necessarily distinct) primitive $2^{n+1}$-th roots of unity, there exist two, 
$\gamma_1$ and $\gamma_2$, such that $\beta:=\gamma_1/\gamma_2$ is a $2^{m+1}$-th root of unity. This is equivalent to saying that among $2^{n-m-1}+1$ (not necessarily distinct) 
odd integers in $\{1,3,...,2^{n+1}-1\}$, there exist two whose difference is divisible by $2^{n-m}$. Indeed, these integers fall into 
$2^{n-m-1}$ possible residue classes modulo $2^{n-m}$.
By pigeonhole principle, two of them fall into the same residue class, and hence the claim follows. 

Now, for $\gamma_1$, $\gamma_2$, $\beta$ as above, we have $\gamma_1-\beta\gamma_2=0$, but clearly $x_1-\beta x_2\not=0$ for any two distinct positive integers $x_1$ and $x_2$. This proves that 
$$
R(m,n)\le  2^{n-m-1}+1.
$$ 

It remains to show that
\begin{equation} \label{second}
R(m,n)> 2^{n-m-1}.
\end{equation}
Assume the contrary. We look at the example
$$
\gamma_j=\zeta_{2^{n+1}}^{2j-1}, \quad j=1,...,2^{n-m-1}.
$$
These are primive $2^{n+1}$-th roots of unity. Moreover, the set
$$
\{\zeta_{2^{n+1}}^{-1}\gamma_1,...,\zeta_{2^{n+1}}^{-1}\gamma_{2^{n-m-1}}\}
$$
forms an integral basis of $\mathbb{Q}(\zeta_{2^{n}})$ over $\mathbb{Q}(\zeta_{2^{m+1}})$. Hence, if $\beta_1,...,\beta_{2^{n-m-1}}\in 
\mathcal{O}_{\mathbb{Q}(\zeta_{2^{m+1}})}$, then 
$$
\beta_1\gamma_1+...+\beta_{2^{n-m-1}}\gamma_{2^{n-m-1}}=0
$$
implies $\beta_1=...=\beta_{2^{n-m-1}}=0$. But then our second condition 
$$
\beta_1x_1+...+\beta_{2^{n-m-1}}x_{2^{n-m-1}}\not=0
$$
is violated for any integers $x_1,...,x_{2^{n-m-1}}$. This gives a contradiction. Hence, \eqref{second} follows which completes the proof. 
\end{proof}
{\bf Remark.} For our purposes, it would have been sufficient to prove that $R(m,n)\le  2^{n-m-1}+1$. 

\section{Transformation into a combinatorial condition}
By the considerations in the previous section, an admissible $N(n)$ is the smallest integer $N$ such that every partition of the form in \eqref{part} 
satisfies the condition
\begin{equation} \label{cond}
r\ge \lfloor 2^{n-\lfloor \log_2 k_r \rfloor -1} \rfloor +1 \quad \mbox{for some } r\in \{1,...,s\}.
\end{equation}
In the following, we construct an extreme partition satisfying \eqref{cond}. 
We take $k_1,...,k_{s}$ as large as possible such that \eqref{cond} is {\it not} satisfied if $1\le r\le s-1$ but satisfied if $r=s$. This property
determines the partition in question completely, namely we obtain
$$
s=2^{n-1}+1,
$$
$$
k_r=2^{n-\lceil \log_2 r \rceil}-1 \quad \mbox{if } r\le s-1
$$
and 
$$
k_s=1.
$$
Moreover, we calculate that 
\begin{equation}
\begin{split}
N= & k_1+\cdots +k_s\\ = & (2^n-1)+ \sum\limits_{0\le j\le n-2} 2^j\left(2^{n-1-j}-1\right)+1\\
= & 2^n+(n-1)\cdot 2^{n-1}-\sum\limits_{0\le j\le n-2} 2^j\\ 
= & n\cdot 2^{n-1}+1. 
\end{split}
\end{equation}

{\bf Examples:} For $n=1,2,3,4,5$, we get the partitions 
\begin{equation*}
\begin{split}
2=&1+1,\\
5=&3+1+1,\\
13=&7+3+1+1+1,\\
33=& 15+7+3+3+1+1+1+1+1,\\
81=& 31+15+7+7+3+3+3+3+1+1+1+1+1+1+1+1+1.
\end{split}
\end{equation*}

In the following, we prove that this actually gives exactly the minimal number $N$ we are aiming for. 

\begin{lemma}
The number
$$
N(n)=n\cdot 2^{n-1}+1
$$
is the smallest positive integer $N$ such that every partition of the form in \eqref{part} satisfies the condition \eqref{cond}.
\end{lemma}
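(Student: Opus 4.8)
The plan is to prove two complementary statements: (i) every partition of the form \eqref{part} of an integer $N\ge n\cdot 2^{n-1}+1$ satisfies \eqref{cond}, so in particular $N=n\cdot 2^{n-1}+1$ has the required property; and (ii) for every positive integer $N\le n\cdot 2^{n-1}$ there is a partition of $N$ of the form \eqref{part} \emph{violating} \eqref{cond}, so that no smaller value of $N$ works. The extreme partition already constructed just above does essentially all the work for (ii); statement (i) is the substantive part.

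For (i) I would argue by contradiction. Suppose $N=k_1+\cdots+k_s$ with $k_1\ge\cdots\ge k_s$ and \eqref{cond} fails, i.e. $r\le\lfloor 2^{\,n-\lfloor\log_2 k_r\rfloor-1}\rfloor$ for all $r\in\{1,\dots,s\}$. Taking $r=1$ forces $k_1<2^n$, so all parts lie in $\{1,\dots,2^n-1\}$. For $0\le a\le n-1$ set $c_a:=\#\{r:k_r\ge 2^a\}$; then $c_a=0$ once $a\ge n$, and whenever $c_a\ge 1$ the $c_a$-th largest part is $\ge 2^a$, so applying the failed inequality with $r=c_a$ and using $\lfloor\log_2 k_{c_a}\rfloor\ge a$ gives $c_a\le 2^{\,n-a-1}$. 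Now write $N=\sum_{r}k_r=\sum_{i\ge 1}\#\{r:k_r\ge i\}$ and split the range of $i$ into the dyadic blocks $2^a\le i\le 2^{a+1}-1$, $0\le a\le n-1$; on the $a$-th block $\#\{r:k_r\ge i\}\le c_a\le 2^{\,n-a-1}$, so $N\le\sum_{a=0}^{n-1}2^a\cdot 2^{\,n-a-1}=n\cdot 2^{n-1}$, contradicting $N\ge n\cdot 2^{n-1}+1$.

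For (ii), the partition exhibited above, with $s=2^{n-1}+1$, $k_r=2^{\,n-\lceil\log_2 r\rceil}-1$ for $r\le s-1$ and $k_s=1$, sums to $n\cdot 2^{n-1}+1$ and, by the way it was chosen, fails \eqref{cond} at every index $r\le s-1$; since the inequality at index $r$ involves only $r$ and $k_r$, deleting the part $k_s=1$ leaves a partition of $n\cdot 2^{n-1}$ that violates \eqref{cond} at every index. To descend to arbitrary $N\le n\cdot 2^{n-1}$ I would invoke the obvious monotonicity remark: if some partition of $N$ violates \eqref{cond}, then so does some partition of $N-1$ — drop a part equal to $1$ if present, otherwise decrease the smallest part by $1$ (the result is still non-increasing, and positive); since $\lfloor 2^{\,n-\lfloor\log_2 k\rfloor-1}\rfloor$ is non-increasing in $k$, the inequality continues to fail at every index. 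Iterating downward from $n\cdot 2^{n-1}$ gives (ii), and (i) together with (ii) yields the lemma.

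The only genuinely nontrivial step is the counting bound $N\le n\cdot 2^{n-1}$ in (i); everything else is bookkeeping along the lines already developed above. The point to be careful about is applying $c_a\le 2^{\,n-a-1}$ at the correct index $r=c_a$ (this is where monotonicity of the partition is used) and checking the edge cases $c_a=0$ and parts equal to exact powers of $2$, but I do not expect any real obstacle there.
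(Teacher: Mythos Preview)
Your proof is correct. The route differs from the paper's, however.

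For the forward direction (i), the paper argues by direct comparison with the extreme partition it has just constructed: any partition $k_1'+\cdots+k_{s'}'$ of $n\cdot 2^{n-1}+1$ different from the extreme one must either have $s'\ge s=2^{n-1}+1$ (whence \eqref{cond} holds at $r=s'$) or satisfy $k_r'>k_r$ for some $r$ (whence \eqref{cond} holds at that $r$, since $k_r$ was chosen maximal subject to failure at index $r$). Your approach instead bounds the total size of any partition violating \eqref{cond} via the dyadic layer counts $c_a=\#\{r:k_r\ge 2^a\}$ and the identity $N=\sum_{i\ge 1}\#\{r:k_r\ge i\}$, obtaining $N\le n\cdot 2^{n-1}$ directly. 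This is self-contained and does not reference the extreme partition at all, which is a mild advantage; the paper's argument is shorter but leans on the preceding construction. For the reverse direction (ii), both you and the paper use the same observation that deleting the last part $k_s=1$ from the extreme partition yields a partition of $n\cdot 2^{n-1}$ violating \eqref{cond}; your subsequent descent to all $N\le n\cdot 2^{n-1}$ is correct but unnecessary, since exhibiting a single bad partition of $n\cdot 2^{n-1}$ already shows minimality.
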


\begin{proof}
Look at the extreme partition constructed above. $N$ cannot be chosen smaller because 
$$
n\cdot 2^{n-1}=k_1+\cdots + k_{s-1}
$$
is a partition of $n\cdot 2^{n-1}$ which does not satisfy the required condition. Now, if 
$$
N=n\cdot 2^{n-1}+1=k_1'+...+k_{s'}' \quad (k_1'\ge...\ge k_{s'}')
$$
is any partition different from our extreme partition above, then $s'\ge s$ or $k_r'>k_r$ for some $r\in \{1,...,s'\}$. In the first case,
$$
s'\ge s = 2^{n-1} +1 \ge \lfloor 2^{n-\lfloor \log_2 k_{s'}' \rfloor -1} \rfloor +1.
$$
In the second case, 
$$
r\ge \lfloor 2^{n-\lfloor \log_2 k_r' \rfloor -1} \rfloor +1 
$$
by construction of the partition $N=k_1+...+k_s$. Hence, our new partition $N=k_1'+...+k_{s'}'$ satisfies the desired condition. This completes the proof. 
\end{proof}

\section{Back to congruences modulo prime powers}
Now we are ready to prove what we formulated as a goal in section \ref{conprime}. In addition, we observe that we even get an upper bound for $p$ which does only depend on $x$ and not on $n$. 
Indeed, taking our proof of Lemma \ref{Rformula} in consideration, we may choose $\beta_u=1$ and $\beta_v=-\beta$ with $\beta$ a $2^{m+1}$-th root of unity for suitable distinct $u,v\in \{1,...,r\}$ and
$\beta_i=0$ if $i\not\in \{1,...,r\}\setminus \{u,v\}$. This implies 
$$
|\sigma(\beta_1)(x_1+\sigma(\alpha_{j_1}))+\cdots +\sigma(\beta_r)(x_r+\sigma(\alpha_{j_r}))|\le 2(x+1)
$$
for any $\sigma\in$ Gal$(\mathbb{Q}(\zeta_{2^{n+1}}):\mathbb{Q})$ and hence 
$$
N_{\mathbb{Q}(\zeta_{2^{n+1}}):\mathbb{Q}}\left(\beta_1(x_1+\alpha_{j_1})+\cdots +\beta_r(x_r+\alpha_{j_r})\right)\le 2^{2^n}(x+1)^{2^n}.
$$
Upon recalling that $\beta_1(x_1+\alpha_{j_1})+\cdots +\beta_r(x_r+\alpha_{j_r})\in \mathcal{O}_{\mathbb{Q}(\zeta_{2^{m+1}})}$, it 
follows that
$$
N_{\mathbb{Q}(\zeta_{2^{m+1}}):\mathbb{Q}}\left(\beta_1(x_1+\alpha_{j_1})+\cdots +\beta_r(x_r+\alpha_{j_r})\right)\le 2^{2^m}(x+1)^{2^m}.
$$
Under the condition \eqref{krcond} which says that $k_r\ge 2^m$, it follows now from \eqref{keybound} that 
$$
p\le 2(x+1).
$$
Similarly, we find that
$$
p\le x+1
$$
under the condition \eqref{krcondi} which says that $k_r\ge 2^n$. 

Summarizing our results in the previous sections and taking our observation above into account, we thus have established the following.

\begin{theorem}\label{3}
Let $n\in \mathbb{N}$ be given. Set $N=N(n):=n\cdot 2^{n-1}+1$. Then if  
$$
N=k_1+k_2+\cdots +k_s \quad (k_1\ge k_2\ge ...\ge k_s)
$$
is any partition of $N$ and the system of congruences 
\begin{equation*}
\begin{split}
x_1^{2^n}+1 \equiv & 0 \bmod{p^{k_1}}\\ 
\vdots\\ 
x_s^{2^n}+1 \equiv & 0 \bmod{p^{k_s}}
\end{split}
\end{equation*}
holds for distinct $x_1,...,x_s \in \mathbb{N}$ and $p$ an odd prime, then  
$$
p \le 2(x+1),
$$
where $x:=\{|x_1|,...,|x_s|\}$.
\end{theorem}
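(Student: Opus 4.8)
The plan is to splice together the cyclotomic reformulation of Section~4, the value $R(m,n)$ computed in Lemma~\ref{Rformula}, and the combinatorial optimum $N(n)$ of Section~7, and then to read off the explicit constant from the size estimate of Section~8. First I would pass from the congruences to divisibility of ideals in $\mathcal{O}_{\mathbb{Q}(\zeta_{2^{n+1}})}$. Writing $x_i^{2^n}+1=\prod_{j=1}^{2^n}(x_i+\alpha_j)$ with $\alpha_1,\dots,\alpha_{2^n}$ the primitive $2^{n+1}$-th roots of unity, and using that an odd prime $p$ cannot divide $\mbox{disc}(\mathcal{O}_{\mathbb{Q}(\zeta_{2^{n+1}})})$ (whose only rational prime factor is $2$), the $i$-th congruence $x_i^{2^n}+1\equiv 0\bmod p^{k_i}$ forces $\mathfrak{P}^{k_i}\mid (x_i+\alpha_{j_i})\mathcal{O}_{\mathbb{Q}(\zeta_{2^{n+1}})}$ for a single prime ideal $\mathfrak{P}$ above $p$ and a uniquely determined index $j_i$. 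Since $k_1\ge\cdots\ge k_s$, this yields $\mathfrak{P}^{k_r}\mid (x_i+\alpha_{j_i})\mathcal{O}_{\mathbb{Q}(\zeta_{2^{n+1}})}$ for all $i\le r$ and every $r\le s$, which is \eqref{divides}.

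Next I would exploit a $\mathbb{Q}(\zeta_{2^{m+1}})$-linear relation among the relevant roots. If, for some $m<n$ and some index $r$, there are $\beta_1,\dots,\beta_r\in\mathcal{O}_{\mathbb{Q}(\zeta_{2^{m+1}})}$ with $\beta_1\alpha_{j_1}+\cdots+\beta_r\alpha_{j_r}=0$ and $\beta_1x_1+\cdots+\beta_rx_r\ne 0$, then $\beta_1(x_1+\alpha_{j_1})+\cdots+\beta_r(x_r+\alpha_{j_r})=\beta_1x_1+\cdots+\beta_rx_r$ is a nonzero element of $\mathcal{O}_{\mathbb{Q}(\zeta_{2^{m+1}})}$ divisible by $\mathfrak{P}^{k_r}$, hence by $(\mathfrak{P}\cap\mathcal{O}_{\mathbb{Q}(\zeta_{2^{m+1}})})^{k_r}$; taking the norm down to $\mathbb{Q}$ gives $p^{k_r}\le N_{\mathbb{Q}(\zeta_{2^{m+1}}):\mathbb{Q}}(\beta_1x_1+\cdots+\beta_rx_r)$, which is usable precisely when $k_r\ge 2^m$ (so that $\beta_1x_1+\cdots+\beta_rx_r$ actually lives in the degree-$2^m$ subfield). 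By Lemma~\ref{Rformula} such $\beta_i$ exist whenever $r\ge R(m,n)=\lfloor 2^{n-m-1}\rfloor+1$; following the proof of that lemma one may take $\beta_u=1$, $\beta_v=-\beta$ with $\beta$ a $2^{m+1}$-th root of unity for a suitable pair $u\ne v$ in $\{1,\dots,r\}$ and all other $\beta_i=0$, and then $\beta_1x_1+\cdots+\beta_rx_r=x_u-\beta x_v\ne 0$ holds automatically, because $x_u,x_v$ are distinct positive integers and $\beta$ a root of unity.

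Now I would optimise the choice of $m$: taking $m:=\lfloor\log_2 k_r\rfloor$ makes $k_r\ge 2^m$ automatic, and the surviving requirement that a good pair $(r,m)$ exist is exactly the combinatorial condition \eqref{cond}, i.e.\ $r\ge\lfloor 2^{n-\lfloor\log_2 k_r\rfloor-1}\rfloor+1$ for some $r\in\{1,\dots,s\}$. This is precisely where the value $N(n)=n\cdot 2^{n-1}+1$ is used: by the lemma of Section~7, every partition of $N(n)$ satisfies \eqref{cond}, so an admissible index $r$ and exponent $m$ always exist (and if $k_r\ge 2^n$ one simply descends all the way to $\mathbb{Q}$, which is even easier and gives $p\le x+1$).

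Finally, to turn $p\ll_n x$ into the clean bound $p\le 2(x+1)$ I would feed the explicit small $\beta_i$ into the norm estimate. With $\beta_u=1$, $\beta_v=-\beta$, every archimedean conjugate of $\beta_1(x_1+\alpha_{j_1})+\cdots+\beta_r(x_r+\alpha_{j_r})$ has absolute value at most $2(x+1)$, so $N_{\mathbb{Q}(\zeta_{2^{m+1}}):\mathbb{Q}}(\beta_1x_1+\cdots+\beta_rx_r)\le\bigl(2(x+1)\bigr)^{2^m}$; combining this with $p^{k_r}\le N_{\mathbb{Q}(\zeta_{2^{m+1}}):\mathbb{Q}}(\beta_1x_1+\cdots+\beta_rx_r)$ and $k_r\ge 2^m$ gives $p^{2^m}\le\bigl(2(x+1)\bigr)^{2^m}$, hence $p\le 2(x+1)$. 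Since the substantial inputs — unramifiedness of odd primes in the $2$-power cyclotomic tower, the exact formula for $R(m,n)$, and the extremal partition realising $N(n)$ — are all already in place, I do not expect a genuine obstacle; the step most prone to slips is the bookkeeping: checking that the index $r$ returned by the combinatorial lemma lies in $\{1,\dots,s\}$, that $m=\lfloor\log_2 k_r\rfloor$ is a legitimate (non-negative) exponent since $k_r\ge 1$, and that the side condition $\beta_1x_1+\cdots+\beta_rx_r\ne 0$ is truly never violated, all of which follow from the $x_i$ being distinct positive integers.
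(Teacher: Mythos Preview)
Your proposal is correct and follows the paper's argument essentially verbatim: the cyclotomic lift of Section~4, the two-term relation $\beta_u=1$, $\beta_v=-\beta$ extracted from the proof of Lemma~\ref{Rformula}, the choice $m=\lfloor\log_2 k_r\rfloor$, the combinatorial input $N(n)=n\cdot 2^{n-1}+1$ from Section~6, and the final norm estimate $\le (2(x+1))^{2^m}$ are exactly the paper's steps. The only slip is the parenthetical reason you give for needing $k_r\ge 2^m$: the element $\sum_i\beta_i x_i$ already lies in $\mathbb{Q}(\zeta_{2^{m+1}})$ regardless of $k_r$; the condition $k_r\ge 2^m$ is required because the norm from that degree-$2^m$ field is bounded by $(2(x+1))^{2^m}$, which is precisely how you use it in your final paragraph.
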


\section{proof of Theorem \ref{1}}
Let us assume that $n\ge 2$, $m>\max\{10^{12},4^{n+1}\}$ and 
$$\ord_{p}(P_{m,n}) > n \cdot 2^{n-1}$$
for all primes $p$ dividing $P_{m,n}$. Then from Theorem \ref{3}, it follows that
\begin{equation} \label{prange}
p \le 2(m+1)
\end{equation}
for all these primes. 
(This will be essential in our proof.)
Hence, we can write $P_{m,n}$ as
\begin{equation}\label{eq1}
P_{m,n} = \prod\limits_{p \le 2(m+1)} p^{{\alpha}_p},
\end{equation}
where the ${\alpha}_p$'s are non-negative integers with either ${\alpha}_p = 0$ or ${\alpha}_p > n \cdot 2^{n-1}$. Clearly,
\begin{equation}\label{eq2}
P_{m,n} > \prod\limits_{x\le m} x^{2^n}= (m!)^{2^n}.
\end{equation}
Write
\begin{equation}\label{eq3}
m! = \prod\limits_{p \leq m} p^{{\beta}_p},
\end{equation}
where the ${\beta}_p$'s are positive integers.

Combining \eqref{eq1}, \eqref{eq2} and \eqref{eq3}, we have
$$\Big(\prod\limits_{p \leq m} p^{{\beta}_p}\Big)^{2^n} < \Big(\prod\limits_{p \le 2(m+1)} p^{{\alpha}_p}\Big).$$
Taking logarithm, if follows that
\begin{equation}\label{eq4}
\sum\limits_{p \leq m} {\beta}_p \log p < \frac{1}{2^n} \sum\limits_{p \le 2(m+1)} {\alpha}_p \log p.
\end{equation}
Since
$$
x^{2^n} + 1 \equiv
\left\{\begin{array}{cl} 
 1 \bmod 4 & \mbox{ if } x  \mbox{ is even, } \\
 2 \bmod 4 & \mbox{ if } x  \mbox{ is odd, }
\end{array}\right.
$$ 
we see that
\begin{equation}\label{eq5}
{\alpha}_2 = \Big\lceil \frac{m}{2} \Big\rceil.
\end{equation}

Now let $p$ be an odd prime dividing $x^{2^n}+1$. Then $p \equiv 1 \bmod {2^{n+1}}$ and, moreover, there are exactly $2^n$ solutions to the congruence 
$$x^{2^n}+1 \equiv 0 \bmod p.$$
(Note that the $2^{n+1}$-th cyclotomic field $\mathbb{Q}(\zeta_{2^{n+1}})$ is the splitting field of the polynomial $x^{2^n}+1$ over $\mathbb{Q}$, and the
rational primes which split completely in this field are exactly those congruent to $1 \bmod{2^{n+1}}$.)  
By Hensel's lemma, they extend uniquely to solutions of 
$$x^{2^n}+1 \equiv 0 \bmod {p^j},$$
for any $j \geq 1$. Thus, each interval of length $p^j$ contains exactly $2^n$ solutions of this congruence. It follows that
\begin{equation}\label{eq6}
\begin{split}
{\alpha}_p \quad = \sum\limits_{j \leq \frac{\log(m^{2^n}+1)}{\log p}} \sharp\{x \leq m\ :\ p^j | (x^{2^n}+1)\}
  \leq \sum\limits_{j \leq \frac{\log(m^{2^n}+1)}{\log p}} 2^n \Big\lceil \frac{m}{p^j}\Big\rceil.
\end{split}
\end{equation}
Also we have
\begin{equation}\label{eq7}
\begin{split}
{\beta}_p \quad = \sum\limits_{j \leq \frac{\log m}{\log p}} \sharp\{x \leq m\ :\ p^j \mid x\}
  = \sum\limits_{j \leq \frac{\log m}{\log p}} \Big\lfloor \frac{m}{p^j} \Big\rfloor.
\end{split}
\end{equation}

From \eqref{eq6} and \eqref{eq7}, we deduce that
\begin{equation}\label{eq8}
\begin{split}
\frac{{\alpha}_p}{2^n} - {\beta}_p \quad & \leq \sum\limits_{j \leq \frac{\log(m^{2^n}+1)}{\log p}} \Big\lceil \frac{m}{p^j}\Big\rceil - \sum\limits_{j \leq \frac{\log m}{\log p}} \Big\lfloor \frac{m}{p^j} \Big\rfloor\\
 = &\sum\limits_{j \leq \frac{\log m}{\log p}} \Big(\Big\lceil \frac{m}{p^j} \Big\rceil - \Big\lfloor \frac{m}{p^j} \Big\rfloor \Big)+ \sum\limits_{\frac{\log m}{\log p} < j \leq \frac{\log(m^{2^n}+1)}{\log p}} \Big\lceil \frac{m}{p^j} \Big\rceil \\
  & \leq \sum\limits_{j \leq \frac{\log m}{\log p}} 1 + \sum\limits_{\frac{\log m}{\log p} < j \leq \frac{\log(m^{2^n}+1)}{\log p}} 1\\
  & \le \frac{\log(m^{2^n}+1)}{\log p}. 
\end{split}
\end{equation}
Combining inequalities \eqref{eq4}, \eqref{eq5} and \eqref{eq8}, and recalling that $\alpha_p=0$ if $p>2$ and $p\not\equiv 1 \bmod{2^{n+1}}$, we obtain
\begin{equation}\label{eq9}
\begin{split}
& \sum\limits_{\substack {p \leq m \\ p \not\equiv 1 \bmod {2^{n+1}}}} {\beta}_p \log p\\  <  & - \sum\limits_{\substack {p \leq m \\ p \equiv 1 \bmod {2^{n+1}}}} {\beta}_p \log p + \frac{1}{2^n} \sum\limits_{\substack {p \leq {2(m+1)} \\ p \equiv 1 \bmod {2^{n+1}}}} {\alpha}_p \log p + \frac{1}{2^n} {\alpha}_2 \log 2\\
= & \frac{1}{2^n} \Big\lceil \frac{m}{2} \Big\rceil \log 2 + \sum\limits_{\substack {p \leq m \\ p \equiv 1 \bmod {2^{n+1}}}} \Big(\frac{{\alpha}_p}{2^n} - {\beta}_p \Big) \log p + \frac{1}{2^n} \sum\limits_{\substack{m <p \leq 2(m+1)\\ p \equiv 1 \bmod {2^{n+1}}}} {\alpha}_p \log p\\
<  &\frac{1}{2^n} \Big\lceil \frac{m}{2} \Big\rceil \log 2 + \pi (m;2^{n+1},1) \log (m^{2^n}+1) + \frac{1}{2^n} 
\sum\limits_{\substack{m <p \leq 2(m+1)\\ p \equiv 1 \bmod {2^{n+1}}}} {\alpha}_p \log p.
\end{split} 
\end{equation}

If $p > m$, then from \eqref{eq6}, we have
\begin{equation}\label{eq10}
{\alpha}_p < 2^{2n}
\end{equation}
since
$$\log(m^{2^n}+1) < 2^n \log(m+1) \leq 2^n \log p.$$
Moreover, from \eqref{eq7}, we have
$${\beta}_p \geq \sum\limits_{j \leq M} \Big\lfloor \frac{m}{p^j} \Big\rfloor \ge \frac{m(1-p^{-M})}{p-1} - M,$$
where $M = \lfloor \frac{\log m}{\log p} \rfloor$.
Hence, for $p \leq m$, we deduce that
\begin{equation}\label{eq11}
{\beta}_p \geq \frac{m-p}{p-1} -M \geq \frac{m-1}{p-1} - \frac{2\log m}{\log p}.
\end{equation}

Combining inequalities \eqref{eq9}, \eqref{eq10} and \eqref{eq11}, we get
\begin{equation*}
\begin{split}
& \sum\limits_{\substack {p \leq m \\ p \not\equiv 1 \bmod {2^{n+1}}}} \Big(\frac{m-1}{p-1} - \frac{2\log m}{\log p}\Big) \log p\\ 
< & \frac{1}{2^n} \Big\lceil \frac{m}{2} \Big\rceil \log 2 + \pi (m;2^{n+1},1) \log (m^{2^n}+1)+   2^n \sum\limits_{\substack{m <p \le 2(m+1)\\  p \equiv 1 \bmod {2^{n+1}}}} \log p,
\end{split}
\end{equation*}
which implies
\begin{equation}\label{eq13}
\begin{split}
& (m-1)\sum\limits_{\substack {p \leq m \\ p \not\equiv 1 \bmod {2^{n+1}}}} \frac{\log p}{p-1}\\ 
< & 2\pi(m)\log m+\frac{m+1}{2^{n+1}} \log 2 + \pi (m;2^{n+1},1) \log (m^{2^n}+1)\\ & + 2^n \left(\pi(2(m+1);2^{n+1},1)-\pi(m;2^{n+1},1)\right)\log(2(m+1))\\
< & 2\pi(m)\log m+\frac{m+1}{2^{n+1}} \log 2 + 2^n \pi(2(m+1);2^{n+1},1) \log(2(m+1)).
\end{split}
\end{equation}
Now recalling that $n\ge 2$ and using Lemma \ref{sum3}, we have
\begin{equation}\label{eq14}
\sum\limits_{\substack{p \leq m \\ p \not\equiv 1 \bmod {2^{n+1}}}} \frac{\log p}{p-1} \geq 
\sum\limits_{a\in \{3,5,7\}} 
\sum\limits_{\substack{p \leq m \\ p \equiv a \bmod 8}} \frac{\log p}{p}>3 (0.245\log m-3.15).
\end{equation}
Combining inequalities \eqref{eq13} and \eqref{eq14}, applying Lemmas \ref{prime1}, and dividing by $m-1$, we obtain
\begin{equation}\label{eq16}
\begin{split}
3 (0.245\log m-3.15) < & \frac{2.2\cdot m}{m-1}+\frac{m+1}{m-1}\cdot \frac{\log 2}{2^{n+1}}  + \frac{8(m+1)}{m-1}.
\end{split} 
\end{equation}
Note that the limit, as $m\rightarrow \infty$, of the right-hand side is 
$$
(\log 2)\cdot 2^{-(n+1)}+10.2.
$$ 
Hence, if $m$ is large enough, then the above inequality will be
false. An easy calculation shows that this is the case whenever $m>10^{12}$ and hence we reach a contradiction. We conclude that
there exists a prime $p$ with $\ord_p(P_{m,n})\le n \cdot 2^{n-1}$, which completes the proof.   

{\bf Remark:} We note that the above argument would not go through if we had some much weaker condition like $p<\beta n^{\alpha}$ with $\alpha>1$ in place of \eqref{prange}.

\section{Proof of Theorem \ref{2}} 
By Theorem \ref{1}, there exists a prime $p$ with $\ord_{p} P_{m,2}\le 4$ if $m>10^{12}$. Therefore, it suffices to check that the same holds if $1 \leq m \leq 10^{12}$. The claim is trivial if $1 \leq m \leq 5$ since $1 \leq \ord_2(P_{m,2}) \leq 3$. Further, we observe that $6^4 + 1 = 1297$ is a prime and the next $x$'s for which $1297$ divides $x^4 + 1$ are $x = 216, 1081, 1291, 1303$. Moreover, for these $x$'s, 
$\ord_{1297}(x^4+1)=1$. Hence, there exists a prime $p$ with $\ord_{p}(P_{m,2}) \leq 4$ if $6 \leq m \leq 1302$. Next, we observe that $1302^4 + 1 = 2873716601617$ is a prime as well. The next $x$'s for which $2873716601617$ divides $x^4 + 1$ are $x = 2207155608, 2871509446009, 2873716600315, 2873716602919$. Moreover, for these $x$'s, 
$\ord_{2873716601617}(x^4+1)=1$. Hence, we conclude that there exists a prime $p$ with $\ord_{p}(P_{m,2}) \leq 4$ if $1302 \leq m \leq 2873716602918$. This completes the proof.


\begin{thebibliography}{3}
\bibitem{ta}
T.M. Apostol, {\it Introduction to Analytic Number Theory}, Springer-Verlag, New York, 1976.

\bibitem{bmor}
M.A. Bennett, G. Martin, K. Obryant and A. Rechnitzer, Explicit bounds for primes in arithmetic progressions,  
{Illinois J. Math.} {\bf 62} (2018), 427-532.

\bibitem{cg}
Y.-G. Chen and M.-L. Gong, On the products $(1^l+1)(2^l+1)\cdots(n^l+1)$ II, {\it J. Number Theory} {\bf 144} (2014), 176-187.

\bibitem{cgr} 
Y.-G. Chen, M.-L. Gong and X.-Z. Ren, On the products $(1^l+1)(2^l+1)\cdots(n^l+1)$, {\it J. Number Theory} {\bf 133} (2013), 2470-2474.

\bibitem{pd}
P. Dusart, The $k^{th}$ th prime is greater than $k(\ln k + \ln\ln k - 1)$ for $k \geq 2$, {\it Math. Comp.} {\bf 68} (1999), 411-415.

\bibitem{jc}
J. Cilleruelo, Squares in $(1^2+1)(2^2+1)\cdots(n^2+1)$, {\it J. Number Theory} {\bf 128} (2008), 2488-2491. 

\bibitem{dl}
P.K. Dey and S. Laishram, Powerful numbers in the product of consecutive integer values of a polynomial, {\it Publ. Math. Debrecen} {\bf 94} (2019), no. 3-4, 319-336.

\bibitem{jhf} 
J.-H. Fang, Neither $\prod_{k=1}^{n}(4k^2+1)$ nor $\prod_{k=1}^{n}(2k(k-1)+1)$ is a perfect square, {\it Integers} {\bf 9} (2009), 177-180.

\bibitem{gk} 
E. G{\"u}rel and A.U.O. Kisisel, A note on the products $(1^{\mu}+1)(2^{\mu}+1)\cdots(n^{\mu}+1)$, {\it J. Number Theory} {\bf 130} (2010), 187-191.

\bibitem{hr}
G. Hardy and E. Wright, {\it An Introduction to the Theory of Numbers}, Oxford Univ. Press, 1980.

\bibitem{MV} H.L. Montgomery; R.C. Vaughan, {\it The large sieve}, Mathematika 20 (1973), 119–134.

\bibitem{yth}
S.-C. Yang, A. Togb\'{e} and B. He, Diophantine equations with products of consecutive values of a quadratic polynomial, {\it J. Number Theory} {\bf 131} (2011), 1840-1851.


\bibitem{zw} 
T. Wang and W. Zhang, Powerful numbers in $(1^k+1)(2^k+1)\cdots(n^k+1)$, {\it J. Number Theory} {\bf 132} (2012), 2630-2635.
 \end{thebibliography}
\end{document}